\documentclass[12pt]{amsart}
\textheight=9in
\textwidth=6.5in
\headheight6.15pt % amsart.cls has a bug which causes overfull
     % \vbox'es in the headline in many situations
\calclayout
\parskip=0pt plus 1pt % amsart.cls doesn't give any stretch to
\usepackage{amscd,amsmath}
\usepackage{graphicx}
\usepackage{amsfonts}
\usepackage{amsmath}
\usepackage{amssymb}
\newtheorem{teo}{Theorem}[section]
\newcommand\grad{\operatorname{grad}}
\renewcommand\div{\operatorname{div}}
\newcommand\curl{\operatorname{curl}}
\newcommand\tr{\operatorname{tr}}
\newcommand\diam{\operatorname{diam}}
\newcommand\F{\mathcal{F}}
\renewcommand\P{\mathcal{P}}
\newcommand\Q{\mathcal{Q}}
\newcommand\R{\mathbb{R}}
\renewcommand\S{\mathcal S}
\newcommand\T{\mathcal T}
\newcommand\x{\times}
\newcommand\Alt{\operatorname{Alt}}

\renewcommand{\>}{\rangle}
\newcommand{\pd}[2]{\frac{\partial#1}{\partial#2}}

\begin{document}

\title[Finite element differential forms on cubic meshes]{Finite element differential forms on curvilinear cubic meshes and
their approximation properties}
\author{Douglas N. Arnold}
\address{School of Mathematics,
University of Minnesota,
Minneapolis, MN 55455, USA}
\email{arnold@umn.edu}
\thanks{The first author was supported by NSF grant
DMS-1115291.}
\author{Daniele Boffi}
\address{Dipartimento di Matematica ``F. Casorati'',
Universit\`a di Pavia, 27100 Pavia, Italy}
\email{daniele.boffi@unipv.it}
\thanks{The second author was supported by IMATI-CNR, Italy and by MIUR/PRIN2009, Italy.}
\author{Francesca Bonizzoni}
\address{Faculty of Mathematics, University of Vienna, 1090 Wien, Austria.}
\email{francesca1.bonizzoni@univie.ac.at}
\keywords{finite element, differential form, tensor product, cubical meshes,
mapped cubical meshes, approximation}
\subjclass[2000]{65N30, 41A25, 41A10, 41A15, 41A63}

\begin{abstract}
We study the approximation properties of a wide class of finite element differential forms
on curvilinear cubic meshes in $n$ dimensions.  Specifically, we consider meshes in which each element
is the image of a cubical reference element under a diffeomorphism, and finite element
spaces in which the shape functions and degrees of freedom are obtained from the reference
element by pullback of differential forms.  In the case where the diffeomorphisms from
the reference element are all affine, i.e., mesh consists of parallelotopes, it is standard
that the rate of convergence in $L^2$ exceeds by one the degree of the largest full polynomial
space contained in the reference space of shape functions.  When the diffeomorphism is
multilinear, the rate of convergence for the same space of reference shape function
may degrade severely, the more so when the form degree is larger.  The main result of
the paper gives a sufficient condition on the reference shape functions to obtain a given
rate of convergence.  
\end{abstract}
\maketitle

\section{Introduction}\label{sc:intro}
Finite element simulations are often performed using meshes of
quadrilaterals in two dimensions or of hexahedra in three.  In simple
geometries the meshes may consist entirely of squares, rectangles,
or parallelograms, or their three-dimensional analogues, leading
to simple data structures.  However, for more general geometries,
a larger class of quadrilateral or hexahedral elements is needed,
a common choice being elements which are the images of a square or cube under an
invertible multilinear map.  In such cases, the shape functions
for the finite element space are defined on the square or cubic
reference element and then transformed to the deformed physical
element.  In \cite{DBF} it was shown in the case of scalar ($H^1$)
finite elements in two dimensions that, depending on the choice of
reference shape function space, this procedure may result in a loss
of accuracy in comparison with the accuracy achieved on meshes of
squares.  Thus, for example, the serendipity finite element space
achieves only about one half the rate of approximation when applied
on general quadrilateral meshes, as compared to what it achieves
on meshes of squares.  The results of \cite{DBF} were extended to
three dimensions in \cite{GM}.  The case of vector ($H(\div)$)
finite elements in two dimensions was studied in \cite{DBF2}.
In that case the transformation of the shape functions must be
done through the Piola transform and it turns out the same issue
arises, but even more strongly, the requirement on the reference
shape functions needed to ensure optimal order approximation being
more stringent.  Some results were obtained for $H(\curl)$
and $H(\div)$ finite elements in 3-D in \cite{FalkGattoMonk}.

The setting of finite element exterior calculus (see~\cite{D,DRR,DRR2})
provides a unified framework for the study of this problem.
In this paper we discuss the construction of finite element
subspaces of the domain $H\Lambda^k$ of the exterior derivative
acting on differential $k$-forms, $0\le k\le n$, in any number $n$
of dimensions.  This includes the case of scalar $H^1$ finite
elements ($k=0$), $L^2$ finite elements (in which the Jacobian
determinant enters the transformation, $k=n$), and, in three-dimensions,
finite elements in $H(\curl)$ ($k=1$) and $H(\div)$ ($k=2$).

The paper begins with a brief review of the relevant concepts
from differential forms on domains in Euclidean space.  We also
review, in Section~\ref{sc:tp}, the tensor product construction for differential
forms, and complexes of differential forms.  Although our main
result will not be restricted to elements for which the reference
shape functions are of tensor product type, such tensor product
elements will play an essential role in our analysis.
Next, in
Section~\ref{sc:fe}, we discuss the construction of finite element
spaces of differential forms and the use of reference domains and
mappings.  Combining these two constructions, in Section~\ref{sc:q},
we define the $\Q_r^-\Lambda^k$ finite element spaces, which may be
seen to be the most natural finite element subspace of $H\Lambda^k$
using mapped cubical meshes.  In Section~\ref{sc:approx}
we obtain the main result of the paper,
conditions for $O(h^{r+1})$ approximation in $L^2$
for spaces of differential $k$-forms on mapped cubic meshes.
More precisely, in the case of multilinear mappings Theorem~\ref{th:poly}
shows that a sufficient condition for $O(h^{r+1})$ approximation in $L^2$ is
that the reference finite element space contains $\Q_{r+k}^-\Lambda^k$.
Again we see loss of accuracy when the mappings are multilinear,
in comparison to affine (for which $\P_r\Lambda^k$ guarantees approximation of
order $r+1$), with the effect becoming more severe for
larger $k$.  In the final section we determine the extent of the
loss of accuracy for the $\Q_r^-\Lambda^k$ spaces. We discuss several examples
of finite element spaces, including standard $\Q_r$ spaces ($k=0$),
$\P_r\Lambda^n$ spaces, and serendipity spaces $\S_r\Lambda^k$ which have been
recently described in~\cite{serendipity}.

\section{Preliminaries on differential forms}\label{sc:prelim}
We begin by briefly recalling some basic notations, definitions, and properties of differential
forms defined on a domain $S$ in $\R^n$.  A differential $k$-form on $S$ is a function $S\to\Alt^k\R^n$,
where $\Alt^k\R^n$ denotes the space of alternating $k$-linear forms mapping $(\R^n)^k=\R^n\x\cdots\x\R^n\to\R$.
By convention
$\Alt^0\R^n=\R$, so differential $0$-forms are simply real-valued functions.  The space $\Alt^k\R^n$ has
dimension $\binom{n}{k}$ for $0\le k\le n$ and vanishes for other values of $k$.
A general differential $k$-form on $S$ can be expressed uniquely as
\begin{equation}\label{df}
f = \sum_{\sigma\in\Sigma(k,n)} f_\sigma\,dx^\sigma = \sum_{1\le \sigma_1\le\cdots\le \sigma_k\le n} f_{\sigma_1\cdots \sigma_n}\, dx^{\sigma_1}\wedge\cdots
\wedge dx^{\sigma_k},
\end{equation}
for some coefficient functions $f_\sigma$ on $S$, where $\Sigma(k,n)$ is the set of increasing maps from $\{1,\ldots,k\}$
to $\{1,\ldots,n\}$.
The wedge product $f\wedge g$ of a differential $k$-form
and a differential $l$-form is a differential $(k+l)$-form, and the exterior derivative $df$ of a
differential $k$-form is a differential $(k+1)$-form.  A differential $n$-form $f$ may be integrated
over an $n$-dimensional domain $S$ to give a real number $\int_S f$.

If $\F(S)$ is some space of real-valued functions on $S$, then we denote by
$\F\Lambda^k(S)$ the space of differential $k$-forms with coefficients
in $\F(S)$.  This space is naturally isomorphic to $\F(S)\otimes \Alt^k\R^n$.  Examples
are the spaces $C^\infty\Lambda^k(S)$ of smooth $k$-forms, $L^2\Lambda^k(S)$ of $L^2$ $k$-forms,
$H^r\Lambda^k(S)$ of $k$-forms with coefficients in a Sobolev space,
and $\P_r\Lambda^k(S)$ of forms with polynomial coefficients of degree at most $r$.
The space $L^2\Lambda^k(S)$ is a Hilbert space with inner product
$$
\<f,g\>_{L^2\Lambda^k(S)} = \sum_{\sigma\in\Sigma(k,n)} \<f_\sigma,g_\sigma\>_{L^2(S)},
$$
and similarly for the space $H^r\Lambda^k(S)$, using the usual Sobolev inner-product
$$
\<u,v\>_{H^r(S)} = \sum_{|\alpha|\le r}\<D^\alpha u,D^\alpha v\>_{L^2(S)}, \quad u,v\in H^r(S),
$$
where the sum is over multi-indices $\alpha$ of order at most $r$.

We may view the exterior derivative
as an unbounded operator $L^2\Lambda^k(S)\to L^2\Lambda^{k+1}(S)$ with domain
$$
H\Lambda^k(S):= \{\,f\in L^2\Lambda^k(S)\,|\, df\in L^2\Lambda^{k+1}(S)\,\},
$$
which is a Hilbert space when equipped with the graph norm
$$
\<f,g\>_{H\Lambda^k(S)} = \<f,g\>_{L^2\Lambda^k(S)} + \<df,dg\>_{L^2\Lambda^{k+1}(S)}.
$$
The complex
\begin{equation}\label{l2dR}
0\to H\Lambda^0(S)\xrightarrow{d} H\Lambda^1(S)\xrightarrow{d} \cdots
\xrightarrow{d} H\Lambda^m(S)\to 0
\end{equation}
is the $L^2$ de~Rham complex on $S$.

Let $\hat S$ be a domain in $\R^n$ and $F=(F^1,\ldots,F^n)$ a $C^1$ mapping of $\hat S$ into
a domain $S$ in some $\R^m$.
Given a differential $k$-form $v$ on $S$, its \emph{pullback} $F^*v$ is a
differential $k$-form on $\hat S$.
If
$$
v = \sum_{1\le i_1<\cdots <i_k\le m}v_{i_1\cdots i_k}\, dx^{i_1}\wedge\cdots\wedge dx^{i_k},
$$
then
\begin{equation}\label{pullback}
F^* v = \sum_{1\le i_1<\cdots <i_k\le m}\sum_{j_1,\ldots,j_k=1}^n 
  (v_{i_1\cdots i_k}\circ F) \pd{F^{i_1}}{\hat x^{j_1}}\cdots\pd{F^{i_k}}{\hat x^{j_k}}\,
  d\hat x^{j_1}\wedge\cdots\wedge d\hat x^{j_k}.
\end{equation}
The pullback operation satisfies $(G\circ F)^*= F^*\circ G^*$, and, if $F$ is a diffeomorphism,
$(F^*)^{-1} = (F^{-1})^*$.  The pullback preserves the wedge product and the exterior derivative
in the sense that
$$
F^*(v\wedge w) = F^*v\wedge F^*w, \quad dF^*v = F^*dv.
$$
If $F$ is a diffeomorphism of $\hat S$ onto $S$ which is orientation preserving (i.e., its Jacobian
determinant is positive), then the pullback preserves the integral as well:
$$
\int_{\hat S} F^*v = \int_S v, \quad v\in L^1\Lambda^n(S).
$$

An important application of the pullback is to define the trace of a differential form on
a lower dimensional subset.  If $S$ is a domain in $\R^n$ and $f$ is a subset of the closure
$\bar S$ and also an open subset of a hyperplane
of dimension $\le n$ in $\R^n$, then the pullback of the inclusion map $f\hookrightarrow S$
defines the trace operator $\tr_f$ taking $k$-forms on $\bar S$ to $k$-forms on $f$.

Before continuing, we recall that vector proxies exist for differential $1$-forms and $(n-1)$-forms
on a domain in $\R^n$.  That is, we may identify the $1$-form $\sum_i v_i\,dx^i$ with the
vector field $(v_1,\ldots,v_n):S\to\R^n$, and similarly we may identify the $(n-1)$-form
$$
\sum_i (-1)^i v_i \,dx^1\wedge\cdots \widehat {dx^i}\cdots\wedge dx^n
$$
with the same vector field, where the hat is used to indicate a suppressed argument.
A $0$-form is a scalar function, and an $n$-form can be identified with its coefficient which is
a scalar function.  Thus, on a three-dimensional domain all the spaces entering the de~Rham complex \eqref{l2dR}
have vector or scalar proxies and we may write the complex in terms of these:
$$
0\to H^1(S)\xrightarrow{\grad} H(\curl,S) \xrightarrow{\curl}
H(\div,S)\xrightarrow{\div} L^2(S) \to 0.
$$
Returning to the $n$-dimensional case,
the pullback of a scalar function $f$, viewed as a $0$-form, is just the composition:
$\hat v(\hat x)=v\bigl(F(\hat x)\bigr)$, $\hat x\in \hat S$.  If we identify
scalar functions with $n$-forms, then the pullback is $\hat v(\hat x) = \det[DF(\hat x)]v\bigl(F(\hat x)\bigr)$
where $DF(\hat x)$ is the Jacobian matrix of $F$ at $\hat x$.
For a vector field $v$, viewed as a $1$-form or an $(n-1)$-form, the pullback operation corresponds to
$$
\hat v(\hat x) = [DF(\hat x)]^T v\bigl(F(\hat x)\bigr), \quad
\hat v(\hat x)= \operatorname{adj}[DF(\hat x)] v\bigl(F(\hat x)\bigr),
$$
respectively.  The adjugate matrix, $\operatorname{adj} A$, is the transposed cofactor matrix, which is equal to $(\det A)A^{-1}$ in
case $A$ is invertible.  The latter formula, representing the pullback of an $(n-1)$-form, is called
the Piola transform.

Next we consider how Sobolev norms transform under pullback.
If $F$ is a diffeomorphism of $\hat S$ onto $S$ smooth up to the boundary,
then each Sobolev norm of the pullback $F^* v$ can be bounded in terms of the corresponding Sobolev
norm of $v$ and bounds on the partial derivatives of $F$ and $F^{-1}$.  Specifically, we have the
following theorem.
\begin{teo}\label{th:cov} Let $r$ be a non-negative integer and $M>0$.  There exists
 a constant $C$ depending only on $r$, $M$, and the dimension $n$, such that
$$
\|F^* v\|_{H^r\Lambda^k(\hat S)} \le C\|v\|_{H^r\Lambda^k(S)}, \quad v\in H^r\Lambda^k(S),
$$
whenever $\hat S,S$ are domains in $\R^n$ and $F:\hat S\to S$ is a $C^{r+1}$ diffeomorphism
satisfying
$$
\max_{1\le s\le r+1} |F|_{W^s_\infty(\hat S)}\le M,
\quad | F^{-1}|_{W^1_\infty(S)}\le M.
$$
\end{teo}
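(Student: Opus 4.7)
\emph{Proof proposal.} The plan is to reduce the estimate to the scalar chain-rule estimate $\|u\circ F\|_{H^r(\hat S)}\le C\|u\|_{H^r(S)}$ and then use change of variables. There is no integration-by-parts or density argument needed; the whole proof is a direct computation built on three ingredients: the explicit pullback formula \eqref{pullback}, the Leibniz rule, and the Faà di Bruno (multivariate chain) rule.

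First, I would unwind \eqref{pullback} to see that every coefficient of $F^*v$ in the basis $\{d\hat x^{j_1}\wedge\cdots\wedge d\hat x^{j_k}\}$ is a finite linear combination, with universal integer coefficients, of products of the form $(v_\sigma\circ F)\prod_{l=1}^{k}\partial F^{i_l}/\partial \hat x^{j_l}$. Applying $D^\alpha$ with $|\alpha|\le r$ and expanding by Leibniz produces a sum of terms in which (i) some derivative $D^{\alpha_0}(v_\sigma\circ F)$ with $|\alpha_0|\le r$ appears, and (ii) each remaining factor is a derivative of order at most $r+1$ of a component of $F$. Under the hypothesis $|F|_{W^s_\infty(\hat S)}\le M$ for $s\le r+1$, the factors in (ii) are bounded pointwise by $M$, and one is reduced to controlling $\|D^{\alpha_0}(v_\sigma\circ F)\|_{L^2(\hat S)}$ for scalar coefficients $v_\sigma$ of $v$.

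For the scalar piece I would invoke Faà di Bruno to write $D^{\alpha_0}(u\circ F)$ as a finite linear combination of terms $((D^\beta u)\circ F)\prod_j D^{\gamma_j}F^{i_j}$ with $|\beta|\le|\alpha_0|$, $|\gamma_j|\ge 1$, and $\sum_j|\gamma_j|=|\alpha_0|\le r$, whence each derivative of $F$ appearing has order at most $r\le r+1$ and is bounded by $M$. This gives the pointwise bound $|D^{\alpha_0}(u\circ F)(\hat x)|\le C_1\sum_{|\beta|\le r}|(D^\beta u)(F(\hat x))|$ with $C_1=C_1(r,n,M)$. Changing variables via $x=F(\hat x)$,
$$
\int_{\hat S}|(D^\beta u)\circ F|^2\,d\hat x=\int_S|D^\beta u(x)|^2\,|\det DF^{-1}(x)|\,dx,
$$
and the bound $|F^{-1}|_{W^1_\infty(S)}\le M$ together with Hadamard's inequality gives $|\det DF^{-1}|\le n^{n/2}M^n$. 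Summing the resulting $L^2$ inequalities over $|\alpha|\le r$ and over $\sigma\in\Sigma(k,n)$ yields the claim with $C=C(r,n,M)$.

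I do not expect any essential obstacle: the argument is purely combinatorial bookkeeping once Leibniz and Faà di Bruno are in hand. The only subtlety worth flagging is that the $C^{r+1}$ hypothesis on $F$ is sharp and correctly calibrated to the pullback formula, because \eqref{pullback} already consumes one derivative of $F$, so differentiating it $r$ more times can produce a factor of order $r+1$; correspondingly, only first derivatives of $F^{-1}$ are needed, as they appear solely through the Jacobian of the change of variables.
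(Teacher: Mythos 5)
Your argument is correct and is essentially the paper's own proof: both expand $D^\alpha$ of the pullback coefficients via the Leibniz rule, control the composed factor $D^{\alpha_0}(v_\sigma\circ F)$ by the (Fa\`a di Bruno) chain rule using the bounds $|F|_{W^s_\infty(\hat S)}\le M$ for $s\le r+1$, and finish by the change of variables $x=F(\hat x)$, absorbing the Jacobian determinant of $F^{-1}$ via $|F^{-1}|_{W^1_\infty(S)}\le M$. Your version merely spells out the combinatorics (Fa\`a di Bruno, Hadamard's inequality) that the paper leaves implicit.
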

\begin{proof}
 From \eqref{pullback},
$$
\|F^* v\|_{H^r\Lambda^k(\hat S)}
=
\sum_{1\le i_1<\cdots <i_k\le m}\sum_{|\alpha|\le r}
\int_{\hat S} |D^\alpha[(v_{i_1\cdots i_k}\circ F) \pd{F^{i_1}}{\hat x^{j_1}}\cdots\pd{F^{i_k}}{\hat x^{j_k}}](\hat x)|^2\,
 d\hat x.
$$
Using the Leibniz rule and the chain rule, we can bound the integrand by
$$
C\sum_{|\beta|\le r}|(D^\beta v_{i_1\cdots i_k})\bigl( F(\hat x)\bigr)|
$$
where $C$ depends only  on $\max_{1\le s\le r+1} | F|_{W^s_\infty(\hat S)}$
and so can be bounded in terms of $M$.
Changing variables to $x=F\hat x$ in the integral brings in a factor of the Jacobian determinant
of $F^{-1}$, which is also
bounded in terms of $M$, and so gives the result.
\end{proof}

A simple case is when $F$ is a dilation:
$F(\hat x)=h \hat x$ for some $h>0$.  Then \eqref{pullback} becomes
$$
(F^* v)(\hat x) = \sum_{\sigma\in\Sigma(k,n)} v_\sigma(h\hat x) h^k\,
d\hat x^\sigma,
$$
and therefore,
$$
D^\alpha (F^*v)(\hat x) = \sum_{\sigma\in\Sigma(k,n)}  (D^\alpha v_\sigma)(h\hat x) h^{r+k}\,
d\hat x^\sigma,
$$
where $r=|\alpha|$.
We thus get the following theorem.
\begin{teo}\label{th:dil} Let $F$ be the dilation $F\hat x = h\hat x$ for some 
$h>0$, $\hat S$ a domain in $\R^n$, $S=F(\hat S)$, and $\alpha$ a multi-index of order $r$.
Then
$$
\|D^\alpha(F^* v)\|_{L^2\Lambda^k(\hat S)} = h^{r+k-n/2}\|D^\alpha v\|_{L^2\Lambda^k(S)}, \quad v\in H^r\Lambda^k(S).
$$
\end{teo}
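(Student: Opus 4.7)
The plan is that the theorem is essentially an immediate consequence of the two displayed formulas appearing just before its statement, combined with the definition of the $L^2\Lambda^k$ inner product from Section~\ref{sc:prelim}. All the substance is in the change of variables $x=h\hat x$.

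First I would record the pullback formula for the dilation. Since $\partial F^i/\partial \hat x^j = h\,\delta_{ij}$, the general pullback formula \eqref{pullback} collapses to
$$
(F^*v)(\hat x) = \sum_{\sigma\in\Sigma(k,n)} v_\sigma(h\hat x)\, h^k\, d\hat x^\sigma,
$$
because for each increasing $\sigma$ only the single term with $j_l = i_l = \sigma_l$ survives. Next, applying $D^\alpha$ for a multi-index of order $r$, the chain rule produces an additional scalar factor $h^r$ on each coefficient, so
$$
D^\alpha(F^*v)(\hat x) = \sum_{\sigma\in\Sigma(k,n)} (D^\alpha v_\sigma)(h\hat x)\, h^{r+k}\, d\hat x^\sigma.
$$

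Then I would compute the $L^2\Lambda^k(\hat S)$ norm component by component. Using the definition
$\|f\|_{L^2\Lambda^k(\hat S)}^2 = \sum_\sigma \|f_\sigma\|_{L^2(\hat S)}^2$, I get
$$
\|D^\alpha(F^*v)\|_{L^2\Lambda^k(\hat S)}^2
 = h^{2(r+k)} \sum_{\sigma\in\Sigma(k,n)} \int_{\hat S} |(D^\alpha v_\sigma)(h\hat x)|^2\, d\hat x.
$$
Changing variables $x = h\hat x$ in each integral, which maps $\hat S$ onto $S$ and contributes a Jacobian factor $d\hat x = h^{-n}\, dx$, this becomes
$$
h^{2(r+k)-n} \sum_{\sigma\in\Sigma(k,n)} \int_S |(D^\alpha v_\sigma)(x)|^2\, dx = h^{2(r+k)-n}\,\|D^\alpha v\|_{L^2\Lambda^k(S)}^2,
$$
and taking the square root yields the claimed identity.

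There is no real obstacle; the only thing to check carefully is the bookkeeping of the three powers of $h$ (one $h^k$ from the $k$ Jacobian entries in the pullback, one $h^r$ from the $r$ applications of the chain rule, and one $h^{-n/2}$ from the change-of-variables factor on the $L^2$ norm), and to note that the result is an equality rather than an inequality because the dilation acts diagonally on the basis $\{d\hat x^\sigma\}$, so no Leibniz-type cross terms arise.
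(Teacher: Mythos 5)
Your proposal is correct and follows exactly the route the paper takes: the text immediately preceding the theorem derives the same two displayed formulas (the collapsed pullback under dilation and its $D^\alpha$ with the $h^{r+k}$ factor), and the theorem is then obtained, as you do, by summing the squared $L^2$ norms of the components and changing variables $x = h\hat x$ to pick up the $h^{-n}$ Jacobian. The bookkeeping of the powers $h^k$, $h^r$, and $h^{-n/2}$ is right, so there is nothing to add.
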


\section{Tensor products of complexes of differential forms}
\label{sc:tp}
In this section we discuss the tensor product operation on differential forms and
complexes of differential forms.
The tensor product of a differential $k$-form on some domain and a differential $l$-form
on a second domain may be naturally realized as a differential $(k+l)$-form on the
Cartesian product of the two domains.  When this construction is combined with the standard
construction of the tensor product of complexes \cite[Section~5.7]{Hilton-Wylie}, we are led to a realization of the
tensor product of subcomplexes of the de ~Rham subcomplex on two domains as a subcomplex of
the de~Rham complex on the Cartesian product of the domains (see
also~\cite{Christiansen2009,Christiansen2011}).

We begin by identifying the tensor product of algebraic forms on Euclidean spaces.
For $m,n\ge 1$, consider the Euclidean space $\R^{m+n}$ with coordinates denoted
$(x_1,\ldots,x_m,y_1,\ldots,y_n)$.  The projection $\pi_1:\R^{m+n}\to\R^m$
on the first $m$ coordinates defines, by pullback, an injection
$\pi_1^*:\Alt^k\R^m\to\Alt^k\R^{m+n}$, with the embedding of
$\pi_2^*:\Alt^l\R^n\to\Alt^l\R^{m+n}$ defined similarly.
Therefore we may define a bilinear map
$$
\Alt^k\R^m\x\Alt^l\R^n\to\Alt^{k+l}\R^{m+n}, \quad
(\mu,\nu)\mapsto \pi_1^*\mu \wedge \pi_2^*\nu,
$$
or, equivalently, a linear map
$$
\Alt^k\R^m\otimes\Alt^l\R^n\to\Alt^{k+l}\R^{m+n}, \quad
\mu\otimes\nu\mapsto \pi_1^*\mu \wedge \pi_2^*\nu.
$$
This map is an injection.  Indeed, a basis for $\Alt^k\R^m\otimes\Alt^l\R^n$
consists of the tensors $dx^\sigma\otimes dy^\tau$ with
$\sigma\in\Sigma(k,m)$ and $\tau\in\Sigma(l,n)$, which simply maps
to $dx^\sigma\wedge dy^\tau$, an element of the standard basis
of $\Alt^{k+l}\R^{m+n}$.  In view of this injection, we may view
the tensor product of $\mu\otimes\nu$ of a $k$-form $\mu$ on $\R^m$ and
an $l$-form on $\R^n$ as a $(k+l)$-form on $\R^{m+n}$ (namely,
we identify it with $\pi_1^*\mu \wedge \pi_2^*\nu$).

Next we turn to differential forms defined on domains in Euclidean space.
Let $u$ be a differential $k$-form on a domain $S\subset\R^m$ and $v$
a differential $l$-form on $T\subset\R^n$.  We may identify the tensor product
$u\otimes v$ with the differential
$(k+l)$-form $\pi_S^*u \wedge \pi_T^*v$ on $S\x T$ where
$\pi_S:S\x T\to S$ and $\pi_T:S\x T\to T$ are the canonical projections.
In coordinates, this identification is
$$
(\sum_\sigma f_\sigma\,dx^\sigma)
\otimes(\sum_\tau g_\tau\,dy^\tau)
=\sum_{\sigma,\tau}
f_\sigma\otimes g_\tau \, dx^\sigma\wedge dy^\tau.
$$
Note that the exterior derivative of the tensor product is
\begin{equation}\label{dtp}
 \begin{aligned}
 d_{S\x T}(\pi_S^* u \wedge \pi_T^* v) &= d_{S\x T}(\pi_S^* u)\wedge\pi_T^* v
+(-1)^k\pi_S^* u \wedge d_{S\x T}(\pi_T^* v)
\\
&= \pi_S^*(d_S u)\wedge\pi_T^* v
+(-1)^k\pi_S^* u \wedge \pi_T^*(d_T v).
\end{aligned}
\end{equation}

Having defined the tensor product of differential forms,
we next turn to the tensor product of complexes of differential forms.
A subcomplex of the $L^2$ de~Rham complex \eqref{l2dR},
\begin{equation}\label{complV}
0\to V^0 \xrightarrow{d} V^1\xrightarrow{d}\cdots\xrightarrow{d} V^m\to 0,
\end{equation}
is called a de~Rham subcomplex on $S$.  This means that, for each $k$,
$V^k\subset H\Lambda^k(S)$
and $d$ maps $V^k$ into $V^{k+1}$.
Suppose we are given such a de~Rham subcomplex on $S$ and also a de~Rham subcomplex,
\begin{equation}\label{complW}
0\to W^0 \xrightarrow{d} W^1\xrightarrow{d}\cdots\xrightarrow{d} W^n\to 0,
\end{equation}
on $T$.  The tensor product of the complexes \eqref{complV} and \eqref{complW} is
the complex
\begin{equation}
\label{complVW}
0\to (V\otimes W)^0 \xrightarrow{d} (V\otimes W)^1\xrightarrow{d}\cdots\xrightarrow{d} (V\otimes W)^{m+n}\to 0,
\end{equation}
where
\begin{equation}\label{tpspace}
(V\otimes W)^k:=\bigoplus_{i+j=k} (V^i\otimes W^j),\quad k=0,\ldots,m+n,
\end{equation}
and the differential $(V\otimes W)^k\to (V\otimes W)^{k+1}$ is defined by
$$
d(u\otimes v) = d_S^i u\otimes v +(-1)^i u\otimes d_T^j v,\quad
u\in V^i, v\in W^j.
$$
In view of the identification of the tensor product of differential forms
with differential forms on the Cartesian product, the space $(V\otimes W)^k$ in
\eqref{tpspace} consists of differential $k$-forms on $S\x T$, and, in
view of \eqref{dtp}, the differential is the
restriction of the exterior derivative on $H\Lambda^k(S\x T)$.  Thus
the tensor product complex \eqref{complVW} is a subcomplex of the
de~Rham complex on $S\x T$.

\section{Finite elements}\label{sc:fe}
Let $\Omega$ be a bounded domain in $\R^n$, $0\le k\le n$.  As in \cite{C},
a finite element space of $k$-forms on $\Omega$ is assembled from several ingredients: a
\emph{triangulation}
$\T$ of $\Omega$, whose elements we call \emph{finite elements}, and, for each finite element $K$, a space $V(K)$ of
\emph{shape functions} on $K$, and a set
$\Xi(K)$ of \emph{degrees of freedom}.  We now describe these ingredients more precisely.

For the triangulation, we allow the finite elements to be curvilinear polytopes.  This
means that each $K$ is the image $F_K(\hat K)$ of an $n$-dimensional polytope $\hat K$ (so a closed
polygon in two dimensions and a closed polyhedron in three dimensions) under a smooth invertible map $F_K$
of $\hat K$ into $\R^n$.  The faces of $K$ are defined as the images of the faces of $\hat K$, and the
requirement that $\T$ be a triangulation of $\Omega$ means that $\bar \Omega=\bigcup_{K\in\T} K$
and that the intersection of any two elements of $\T$ is either empty or is a common face of both,
of some dimension.

The space $V(K)$ of shape functions is a finite-dimensional space of $k$-forms on $K$.  The degrees
of freedom are a unisolvent set of functionals on $V(K)$, or, otherwise put, $\Xi(K)$ is
a basis for the dual space $V(K)^*$.  Further, each degree of freedom is associated to a specific face
of $K$, and when two distinct elements $K_1$ and $K_2$ intersect in a common face $f$, the
degrees of freedom of $K_1$ and $K_2$ associated to the face $f$ are in $1$-to-$1$ correspondence.

The finite element functions are then defined as the differential forms on $\Omega$ which belong
to the shape function spaces piecewise, and for which corresponding degrees of freedom are single-valued.
That is, to define a finite element function $u$ we specify, for all elements $K$, shape functions $u_K\in V(K)$
satisfying $\xi_1(u_{K_1})=\xi_2(u_{K_2})$
whenever $K_1$ and $K_2$ share a common face and $\xi_1\in\Xi_1(K)$ and $\xi_2\in\Xi_2(K)$
are corresponding degrees of freedom associated to the face.
Then $u$ is defined almost everywhere by setting its restriction
to the interior of each element $K$ to be $u_K$.  The finite element space $V(\T)$ is
defined to be the space of all such finite element functions.

The degrees of freedom associated to faces of dimension $<n$ determine the inter-element
continuity imposed on the finite element functions.  Specifying the continuity in
this way leads to finite element spaces which can be efficiently implemented.  We note that the
finite element space is unchanged if we use a different set of degrees of freedom, as long as
the span of the degrees of freedom associated to each face is unchanged.  Thus we shall usually
specify these spans, rather than a particular choice of basis for them.

When a finite element $K$ is presented as $F_K(\hat K)$ using a \emph{reference element} $\hat K\subset\R^n$
and a diffeomorphic mapping $F_K:\hat K\to K$ (as for curvilinear polytopes), it is often convenient
to specify the shape functions and degrees of freedom in terms of the reference element
and the mapping.  That is, we specify a finite dimensional space $V(\hat K)$ of differential
$k$-forms on $\hat K$, the \emph{reference shape functions}, and define $V(K)=(F_K^{-1})^*V(\hat K)$, the pullback under
the diffeomorphism $F_K^{-1}:K\to \hat K$.  Similarly, given a space $\Xi(\hat K)$ of
degrees of freedom for $V(\hat K)$ on $\hat K$ and a reference
degree of freedom $\hat\xi\in \Xi(\hat K)$ associated to a face
$\hat f$ of $\hat K$, we define $\xi$ by $\xi (v) = \hat\xi(F_K^* v)$ for $v\in V(K)$ and associate
$\xi$ to the face $f=F_K(\hat f)$ of $K$.  In this way we determine the degrees of freedom $\Xi(K)$.
In the present paper we shall be concerned with the case where each element of the mesh is
presented as the image of the unit cube $\hat K=[0,1]^n$  under a multilinear
map (i.e., each component of $F_K$ is a polynomial of degree at most one in each of the $n$ variables).
A special case is when $F_K$ is affine, in which case the elements $K$ are arbitrary parallelotopes
(the generalization of parallelograms in two dimensions and parallelepipeds in three dimensions).  In the general
case, the elements are arbitrary convex quadrilaterals
in two dimensions, but
in more dimensions they may be truly curvilinear: the edges are straight, but the
faces of dimension two need not be planar.

We need some measure of the shape regularity of an element $K$.  Since the shape
regularity should depend on the shape, but not the size, of the element, we require
it to be invariant under dilation.  Therefore, let $h_K=\diam(K)$ and
set $\bar K= h_K^{-1} K$, which is of unit diameter.  Assuming only that the
element is a Lipschitz domain, the Bramble--Hilbert lemma ensures that for $r\ge 0$ integer
there exists a constant $C$ such that
\begin{equation}\label{bh}
\inf_{p\in\P_r(\bar K)}\|u-p\|_{L^2(\bar K)}\le C |u|_{H^{r+1}(\bar K)}, \quad u\in H^{r+1}(\bar K).
\end{equation}
We define $C(K,r)$ to be the least such constant, and take this as our shape regularity
measure.  If the domain $K$ is convex then $C(K,r)$ can be bounded in terms only of $r$ and
the dimension $n$; see \cite{verfurth}.  If the
domain is star-shaped with respect to a ball of diameter at least $\delta h_K$ for some $\delta>0$,
then $C(K,r)$ can be bounded in terms of $r$, $n$, and $\delta$; see \cite{duran}.

The next theorem will play a fundamental role in proving approximation
properties of finite element differential forms.

\begin{teo}\label{th:jackson}
 Let $K$ be a bounded Lipschitz domain in $\R^n$ with diameter $h_K$ and let
 $r$
be a non-negative integer.  Then there exists a constant $C$ depending only
on $r$, $n$, and
the shape constant $C(K,r)$, such that
$$
\inf_{p\in\P_r\Lambda^k(K)}\|u-p\|_{L^2\Lambda^k(K)}\le C
h_K^{r+1}|u|_{H^{r+1}\Lambda^k(K)}, \quad u\in H^{r+1}\Lambda^k(K),
$$
for $0\le k\le n$.
\end{teo}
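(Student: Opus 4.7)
The plan is to reduce the statement to the scalar Bramble--Hilbert estimate \eqref{bh} by working componentwise in the standard basis of $\Alt^k\R^n$. Writing $u = \sum_{\sigma\in\Sigma(k,n)} u_\sigma\,dx^\sigma$ and noting that every element of $\P_r\Lambda^k(K)$ has the analogous form $p = \sum_\sigma p_\sigma\,dx^\sigma$ with $p_\sigma\in\P_r(K)$, the identities
\[
 \|u-p\|_{L^2\Lambda^k(K)}^2 = \sum_\sigma \|u_\sigma-p_\sigma\|_{L^2(K)}^2,
 \qquad
 |u|_{H^{r+1}\Lambda^k(K)}^2 = \sum_\sigma |u_\sigma|_{H^{r+1}(K)}^2
\]
show that the form-valued approximation problem decouples into $\binom{n}{k}$ independent scalar problems, one per multi-index $\sigma$, each fitting directly into the framework of \eqref{bh}.

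For each scalar problem I would upgrade \eqref{bh}, which is stated on the unit-diameter reference $\bar K = h_K^{-1}K$, to an estimate on $K$ itself by a standard dilation argument. Setting $\hat u_\sigma(\hat x):=u_\sigma(h_K\hat x)$ on $\bar K$, the scalar Bramble--Hilbert lemma provides $\hat p_\sigma\in\P_r(\bar K)$ with $\|\hat u_\sigma-\hat p_\sigma\|_{L^2(\bar K)}\le C(K,r)\,|\hat u_\sigma|_{H^{r+1}(\bar K)}$. Defining $p_\sigma(x):=\hat p_\sigma(x/h_K)\in\P_r(K)$ and applying the elementary change-of-variable identities
\[
 \|u_\sigma-p_\sigma\|_{L^2(K)}^2 = h_K^{n}\,\|\hat u_\sigma-\hat p_\sigma\|_{L^2(\bar K)}^2,
 \qquad
 |\hat u_\sigma|_{H^{r+1}(\bar K)}^2 = h_K^{2(r+1)-n}\,|u_\sigma|_{H^{r+1}(K)}^2
\]
yields $\|u_\sigma-p_\sigma\|_{L^2(K)}\le C(K,r)\,h_K^{r+1}\,|u_\sigma|_{H^{r+1}(K)}$. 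Assembling $p=\sum_\sigma p_\sigma\,dx^\sigma\in\P_r\Lambda^k(K)$ and summing the squared coefficient bounds over $\sigma$ then delivers the desired form-valued inequality with the same constant $C(K,r)$ from the scalar estimate.

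I do not expect a genuine obstacle here. The only subtlety worth naming is that the $k$-form structure might at first appear to contribute additional powers of $h_K$ to the scaling, as indeed it does when forms are transported by pullback under dilation (compare the $h^{r+k-n/2}$ factor in Theorem~\ref{th:dil}). In the present statement, however, the norms on $\Lambda^k(K)$ are defined intrinsically and componentwise, so the coefficients $u_\sigma$ are approximated as ordinary scalar functions on $K$ and no Piola-type factor enters. Consequently the index $k$ plays no role in the scaling, and the form-valued estimate inherits the shape-dependent constant of the scalar Bramble--Hilbert lemma unchanged.
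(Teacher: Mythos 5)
Your proof is correct and follows essentially the same route as the paper's: reduce componentwise to the scalar Bramble--Hilbert estimate \eqref{bh} on the unit-diameter rescaling $\bar K$ and then undo the dilation. The only cosmetic difference is that the paper transports $u$ to $\bar K$ via the form pullback $F^*$ (so the $k$-dependent factor $h_K^{k}$ of Theorem~\ref{th:dil} appears on both sides and cancels), whereas you scale the scalar coefficients directly so that no such factor ever enters; both bookkeeping choices yield the identical estimate.
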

\begin{proof}
Let $\bar K=h_K^{-1} K$,  let $F:\bar K\to K$ be the dilation, and set $\bar
u=F^*u$.  Applying \eqref{bh} to each component of $F^*u$, we obtain $p\in \P_r\Lambda^k(\bar K)$ with
\begin{equation}\label{t}
\|F^*u-p\|_{L^2\Lambda^k(\bar K)}\le C |F^*u|_{H^{r+1}\Lambda^k(\bar K)}.
\end{equation}
Set $q = (F^{-1})^*p$.  Since $F^{-1}$ is a dilation, $q\in
\P_r\Lambda^k(K)$, and clearly
$F^*(u-q)=F^*u-p$.  Combining \eqref{t} and Theorem~\ref{th:dil} gives the
desired estimate.
\end{proof}

\section{Tensor product finite element differential forms on cubes}\label{sc:q}
In this section, we describe the construction of specific finite element spaces of differential forms.
We suppose that the triangulation consists of curvilinear cubes, so that each element $K$ of
the triangulation $\T$ is specified by a smooth diffeomorphism $F_K$ taking the unit
cube $\hat K=[0,1]^n$ onto $K\subset \R^n$.  As described above, the shape functions and degrees
of freedom on $K$ can then be determined by specifying a space $V(\hat K)$
of reference shape functions and a set $\Xi(\hat K)$ of reference degrees of freedom.
Now we apply the tensor product construction of Section~\ref{sc:tp} to construct the
reference shape functions and degrees of freedom.

Fix an integer $r\ge 1$.  With $I=[0,1]$ the unit interval,
let $\P_r\Lambda^0(I)=\P_r(I)$ denote the space of polynomial functions ($0$-forms)
on $I$ of degree at most $r$, and let $\P_{r-1}\Lambda^1(I)=\P_{r-1}(I)\,dx$ denote the space of polynomial $1$-forms
of degree at most $r-1$.
Connecting these two spaces with the exterior derivative, we obtain a subcomplex of the
de~Rham complex on the unit interval:
\begin{equation}\label{q1d}
\P_r(I) \xrightarrow{d} \P_{r-1}(I)\,dx.
\end{equation}
Taking the tensor product of this complex with itself, we obtain a subcomplex
of the de~Rham complex on $I^2$, whose spaces we denote by $\Q_r^-\Lambda^k=\Q_r^-\Lambda^k(I^2)$:
\begin{equation}\label{q2d}
\Q_r^-\Lambda^0 \xrightarrow{d} \Q_r^-\Lambda^1\xrightarrow{d} \Q_r^-\Lambda^2.
\end{equation}
Specifically, writing $\P_{r,s}=\P_{r,s}(I^2)$ for $\P_r(I)\otimes\P_s(I)$, we have on $I^2$,
$$
\Q_r^-\Lambda^0 = \P_{r,r}, \quad
\Q_r^-\Lambda^1 = \P_{r-1,r}\,dx \,\oplus\, \P_{r,r-1}\,dy, \quad
\Q_r^-\Lambda^2 = \P_{r-1,r-1}\,dx\wedge dy.
$$
The first space $\Q_r^-\Lambda^0$ is the tensor product polynomial space
traditionally denoted $\Q_r$, and the last space is $\Q_{r-1}\,dx\wedge dy$.

To get spaces on the 3-D cube,
we may further take the tensor product of the 2-D complex \eqref{q2d} with the 1-D complex \eqref{q1d},
or, equivalently, take the tensor product of three copies of the 1-D complex, to obtain
a de~Rham subcomplex on the unit cube $I^3$:
$$
\Q_r^-\Lambda^0 \xrightarrow{d}
\Q_r^-\Lambda^1 \xrightarrow{d}
\Q_r^-\Lambda^2 \xrightarrow{d}
\Q_r^-\Lambda^3.
$$
Here $\Q_r^-\Lambda^0 =\Q_r$, $\Q_r^-\Lambda^3 = \Q_{r-1}\,dx\wedge dy\wedge dz$, and
\begin{gather*}
\Q_r^-\Lambda^1 = \P_{r-1,r,r}\,dx \,\oplus\, \P_{r,r-1,r}\,dy \,\oplus\, \P_{r,r,r-1}\,dz, \\
\Q_r^-\Lambda^2 = \P_{r,r-1,r-1}\,dy\wedge dz \,\oplus\, \P_{r-1,r,r-1}\,dx\wedge dz \,\oplus\, \P_{r-1,r-1,r}\,dx\wedge dy.
\end{gather*}
The extension of this construction to higher dimensions is clear,
yielding spaces $\Q_r^-\Lambda^k(I^n)$ for $0\le k \le n$, $n\ge 1$,
spanned by quantities $p\, dx^\sigma$, where $\sigma\in\Sigma(k,n)$ and $p$ is a polynomial of
degree at most $r$ in all variables and of degree at most $r-1$ in the
variables $x^{\sigma_i}$. More precisely,
$$
\Q_r^-\Lambda^k(I^n) = \bigoplus_{\sigma\in
\Sigma(k,n)}\left[\bigotimes_{i=1}^n \P_{r-\delta_{i,\sigma}}(I)\right]\,
dx^{\sigma_1}\wedge\cdots\wedge dx^{\sigma_k},
$$
where
$$
\delta_{i,\sigma} =
\begin{cases}
 1, & i\in\{\sigma_1,\ldots,\sigma_k\},\\
0, & \text{otherwise}.
\end{cases}
$$
In the case $k=0$, this space is understood to be
$$
\Q_r^-\Lambda^0(I^n) = \bigotimes_{i=1}^n \P_r(I),
$$
i.e., the space conventionally referred to as $\Q_r(I^n)$.  This definition
makes sense also
if $r=0$, so $\Q_0^-\Lambda^0(I^n)=\Q_0(I^n)=\R$ is the space of constant
functions.  For
$k>0$, $\Q_0^-\Lambda^k(I^n)=0$.
We also interpret this in the case $n=0$, so $I^n$ is a single point.  Then
$\Q_r^-\Lambda^0(I^0)$
is understood to be the space $\R$ of constants. It is easy to check that
$$
\dim \Q_r^-\Lambda^k(I^n) = \binom{n}{k}(r+1)^{n-k}r^k,
$$
in all cases $0\le k\le n$, $r\ge 0$.

Let us now characterize the degrees of freedom of $\Q_r^-\Lambda^k(I^n)$.
The space $\P_r\Lambda^0(I)=\P_r(I)$ has dimension $r+1$.  It has one degree of freedom associated
to each of the two vertices $p=0$ and $p=1$, namely the evaluation functional $v\mapsto v(p)$. The
remaining $r-1$ degrees of freedom are associated to $I$ itself, and are given by
$$
v \mapsto \int_I v(x)q(x)\,dx, \quad q\in\P_{r-2}(I).
$$
The space $\P_{r-1}\Lambda^1(I)=\P_{r-1}(I)$ has dimension $r$ and its degrees of
freedom are given by 
$$
v \mapsto \int_I v(x)q(x)\,dx, \quad q\in\P_{r-1}(I).
$$
The tensor product construction then yields degrees of freedom for $\Q_r^-\Lambda^k(I^n)$.
Namely, each degree of freedom for $\Q_r^-\Lambda^k(I^n)$ is of the form
$\xi_1\otimes\ldots\otimes\xi_n$, and is associated to the Cartesian product
face $f_1\times\ldots\times f_n$, where $\xi_j\in \Xi(I)$ is associated to the
face $f_j$ of the interval $I$  for all $j$.
Hence, a set of unisolvent degrees of freedom for  $\Q_r^-\Lambda^k(I^n)$ ($r\ge1$, $0\le k\le n$)
is given by
\begin{equation}\label{dofs}
v \mapsto \int_f \tr_f v(x) \wedge q(x), \quad q\in \Q_{r-1}^-\Lambda^{d-k}(f),
\end{equation}
for each face $f$ of $I^n$ of degree $d\ge k$.

Let us count that we have supplied the
correct number of degrees of freedom.  Since the number of faces of dimension
$d$ of $I^n$
is $2^{n-d}\binom{n}{d}$, the total number of degrees of freedom is
$$
\sum_{d=k}^n 2^{n-d}\binom{n}{d}\binom{d}{k}r^k(r-1)^{d-k}.
$$
Substituting $\binom{n}{d}\binom{d}{k}=\binom{n}{k}\binom{n-k}{n-d}$
and then changing the summation index from $d$ to $m=n-d$ we get:
\begin{align*}
\sum_{d=k}^n 2^{n-d}\binom{n}{k}\binom{n-k}{n-d}r^k(r-1)^{d-k}
&=\binom{n}{k}r^k\sum_{m=0}^{n-k}\binom{n-k}{m} 2^m(r-1)^{n-k-m}\\
&=\binom{n}{k}r^k(r-1+2)^{n-k}=\dim\Q_r^-\Lambda^k(I^n),
\end{align*}
as desired.

Figure~\ref{fg:q}, taken from \cite{fe-families}, shows degree of freedom diagrams for the $\Q_r^-\Lambda^k$ spaces in two and three dimensions.
In such diagrams, the number of symbols drawn in the interior of a face is equal to the number of degrees of freedom
associated to the face.

\begin{figure}[p]
\begin{center}
\begin{tabular}{cccc}
\hspace{-1.35in}$\Q_r^-\Lambda^k$ (2D): & $k=0$ & $k=1$ & $k=2$
\\[.15in]
\raise.5in\hbox{$r=1$}
  & \includegraphics[width=1.2in]{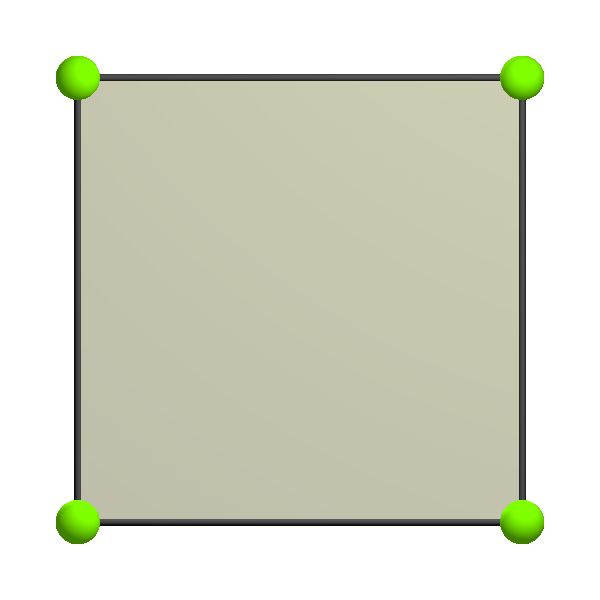}
  & \includegraphics[width=1.2in]{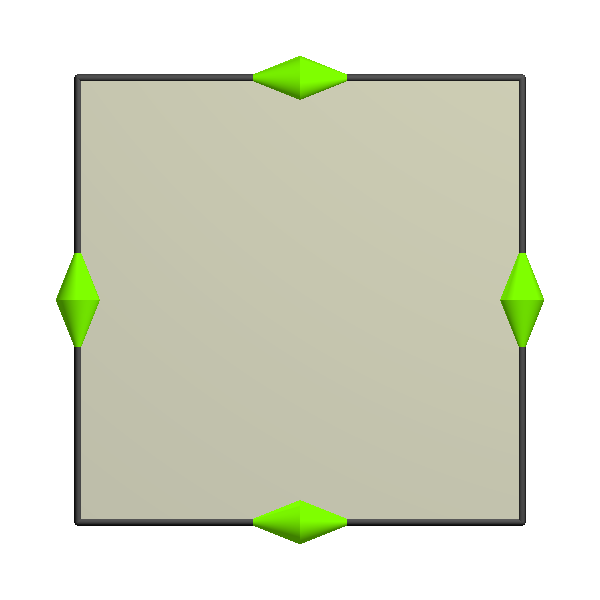}
  & \includegraphics[width=1.2in]{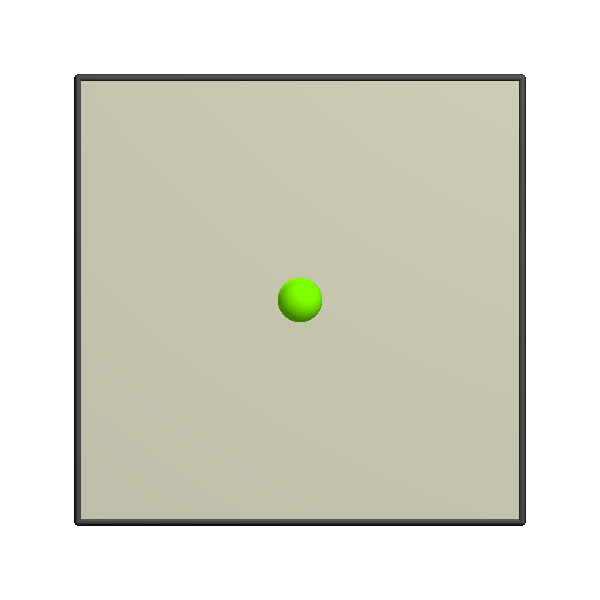}
\\
\raise.5in\hbox{$r=2$}
  & \includegraphics[width=1.2in]{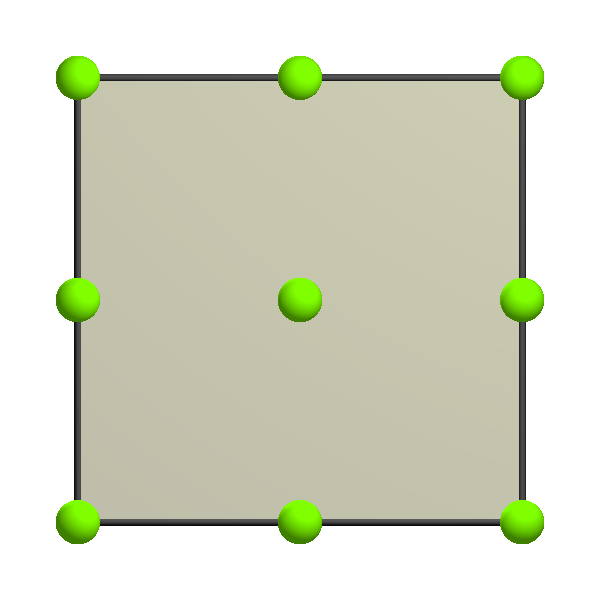}
  & \includegraphics[width=1.2in]{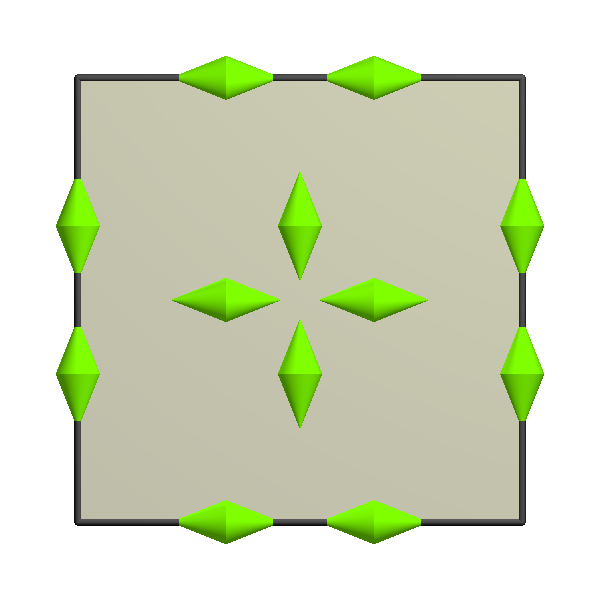}
  & \includegraphics[width=1.2in]{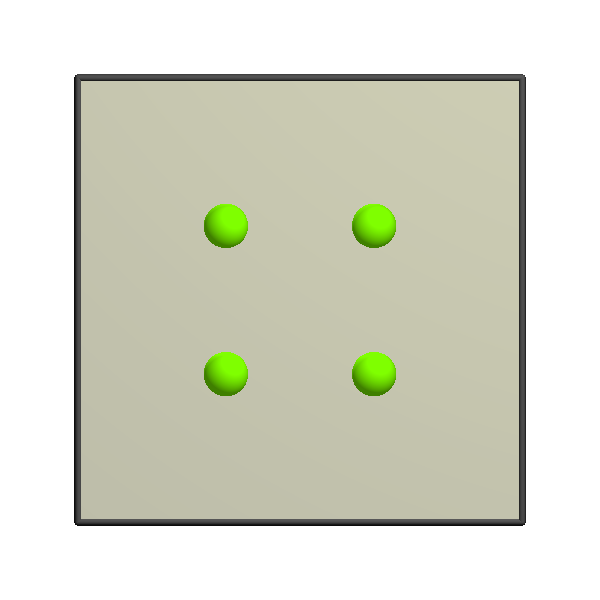}
\\
\raise.5in\hbox{$r=3$}
  & \includegraphics[width=1.2in]{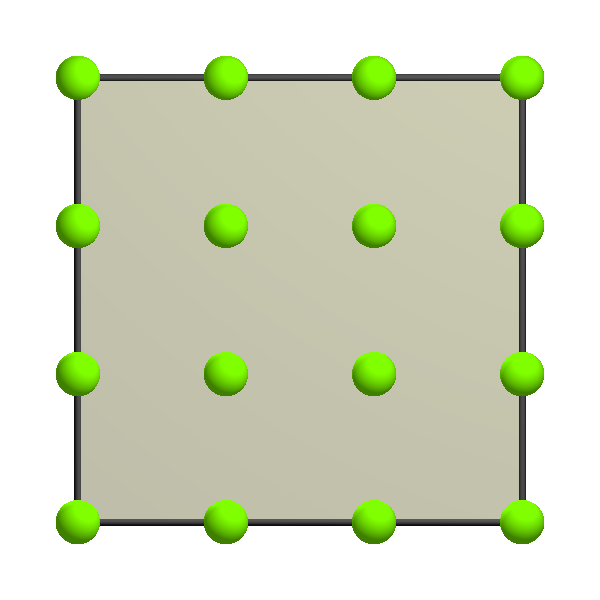}
  & \includegraphics[width=1.2in]{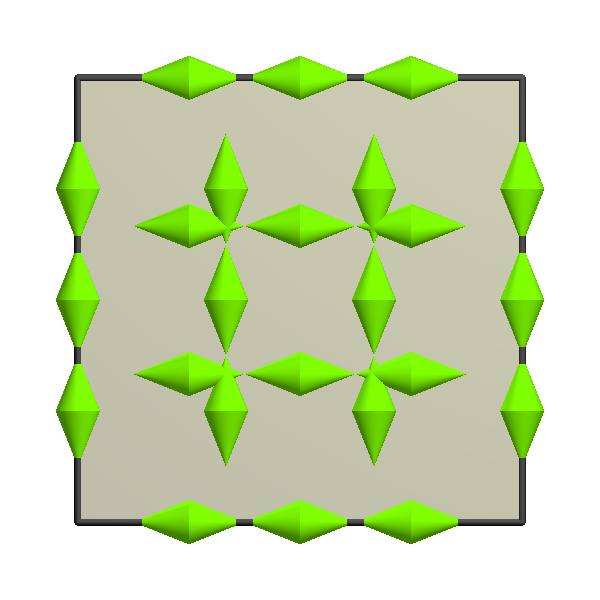}
  & \includegraphics[width=1.2in]{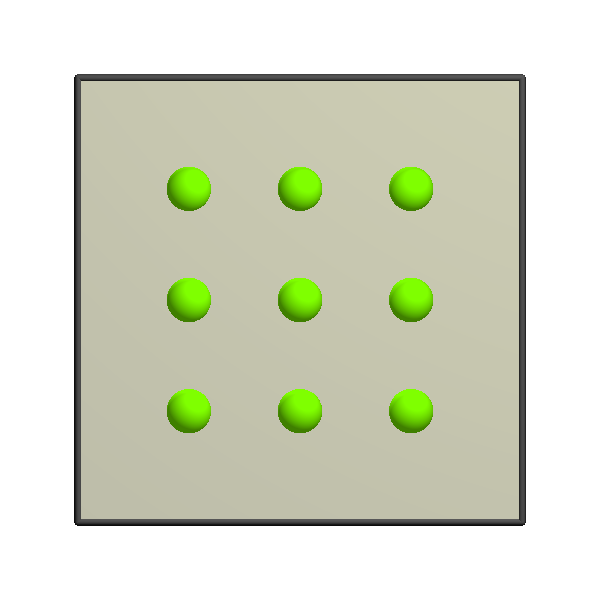}
 \end{tabular}
\begin{tabular}{ccccc}
$\Q_r^-\Lambda^k$ (3D): & $k=0$ & $k=1$ & $k=2$ & $k=3$
\\[.15in]
\raise.5in\hbox{$r=1$}
  & \includegraphics[width=1.2in]{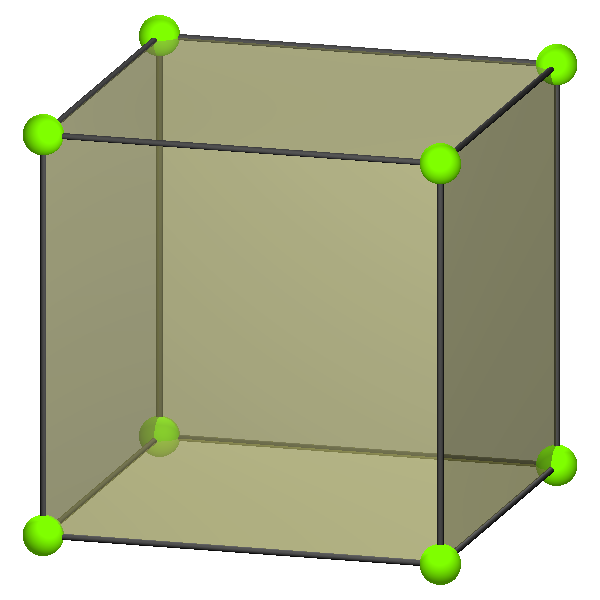}
  & \includegraphics[width=1.2in]{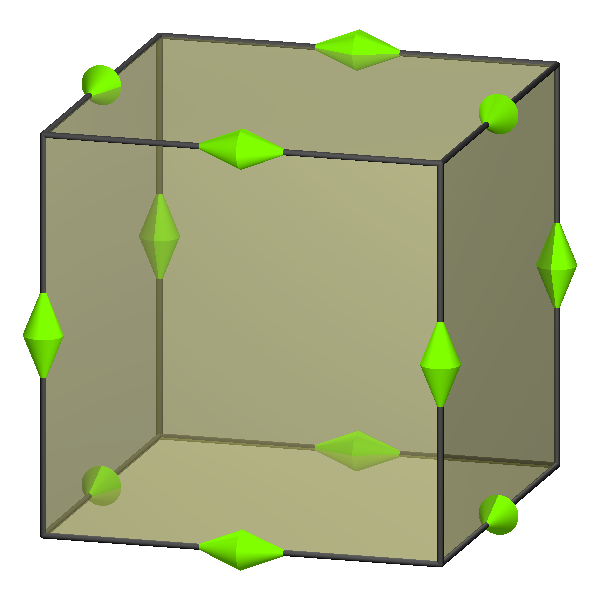}
  & \includegraphics[width=1.2in]{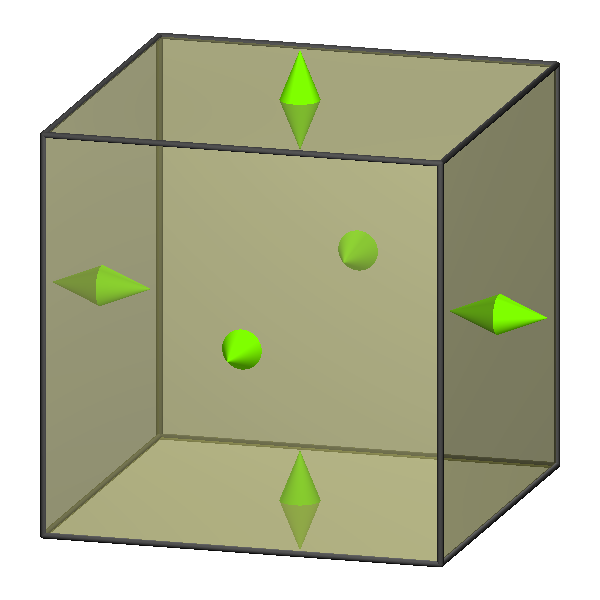}
  & \includegraphics[width=1.2in]{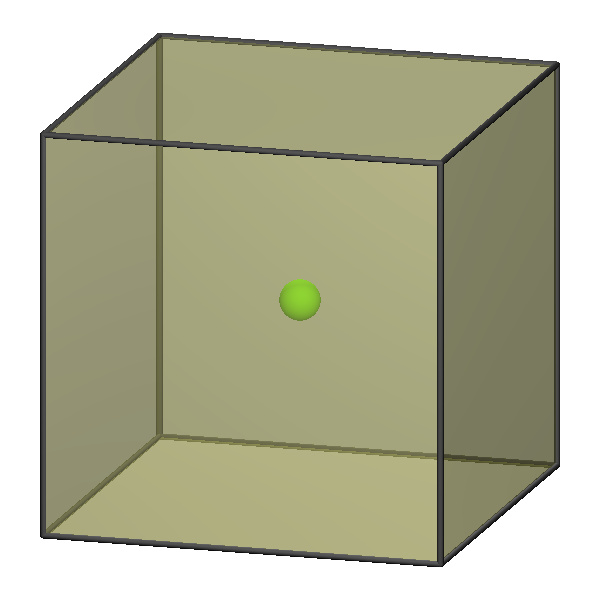}
\\
\raise.5in\hbox{$r=2$}
  & \includegraphics[width=1.2in]{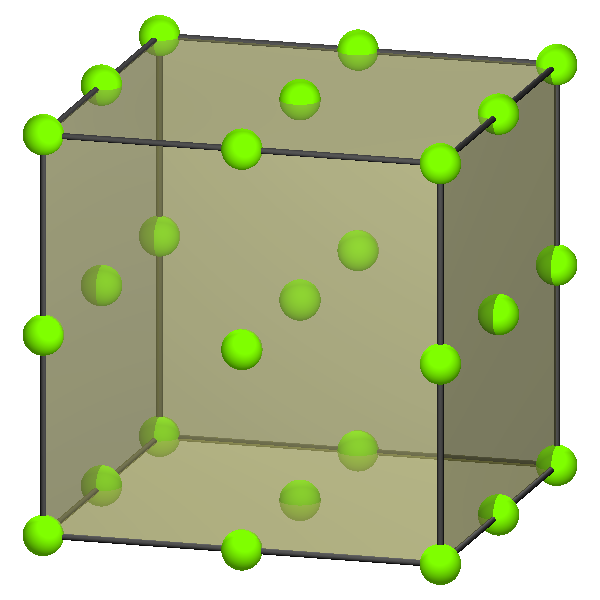}
  & \includegraphics[width=1.2in]{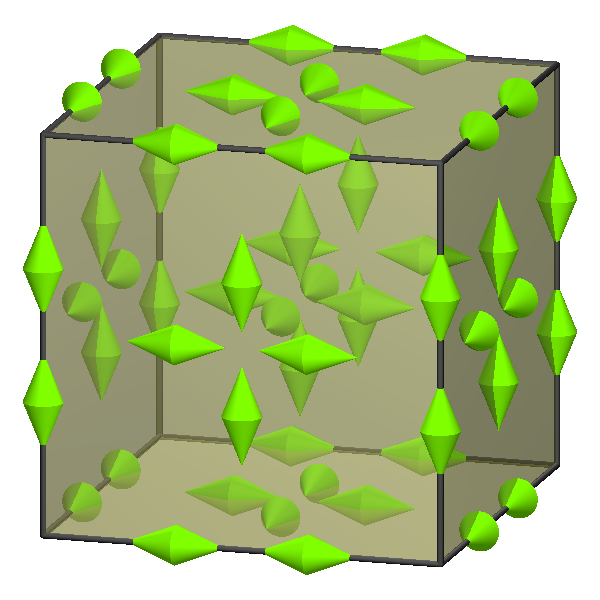}
  & \includegraphics[width=1.2in]{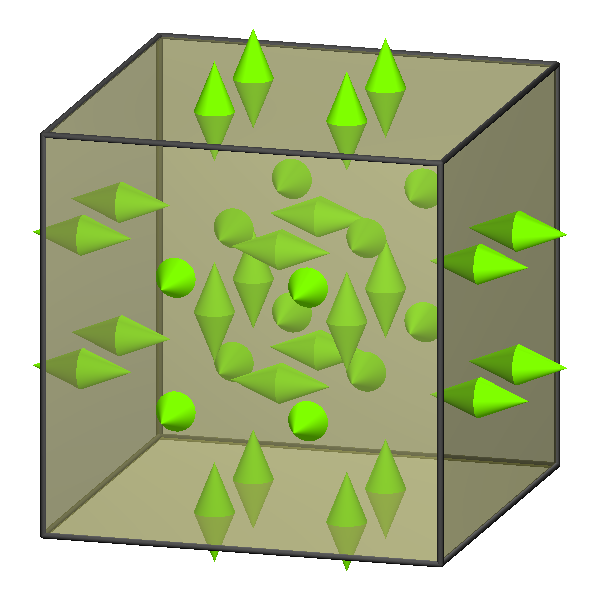}
  & \includegraphics[width=1.2in]{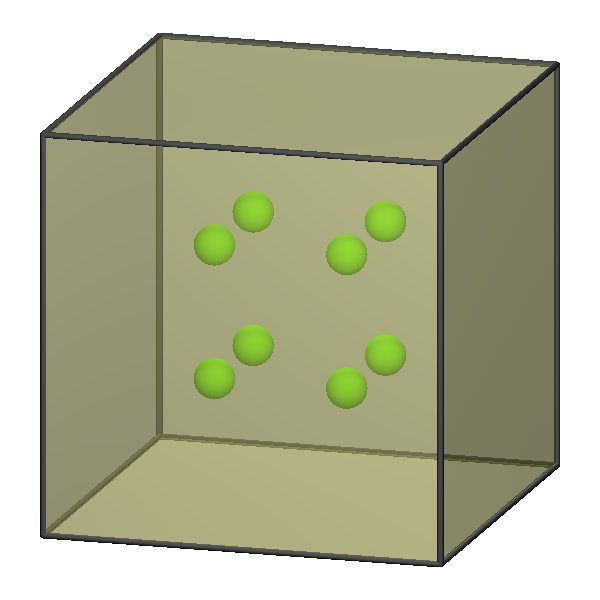}
\\
\raise.5in\hbox{$r=3$}
  & \includegraphics[width=1.2in]{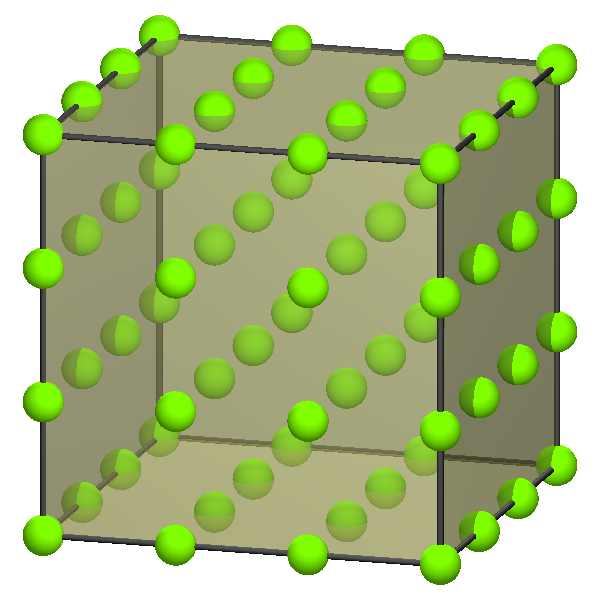}
  & \includegraphics[width=1.2in]{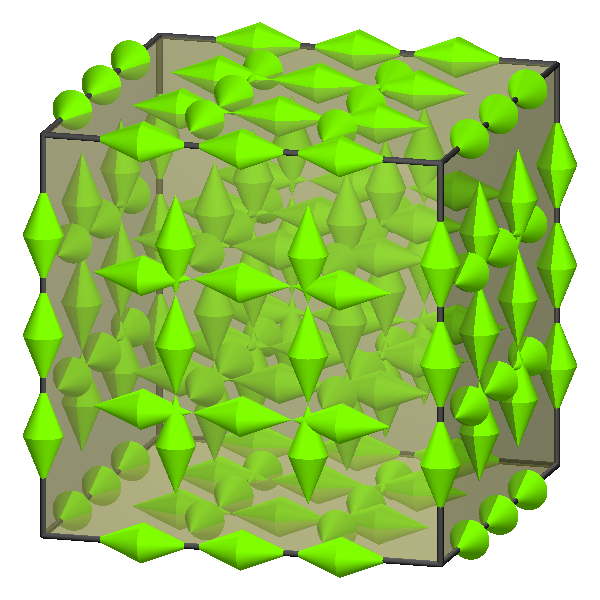}
  & \includegraphics[width=1.2in]{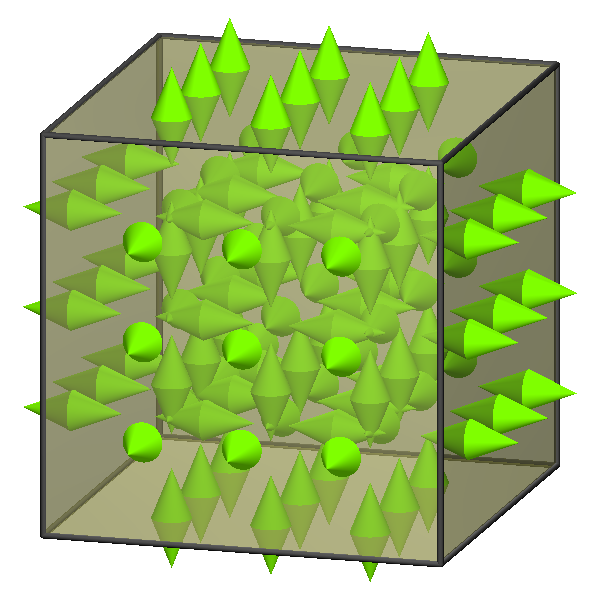}
  & \includegraphics[width=1.2in]{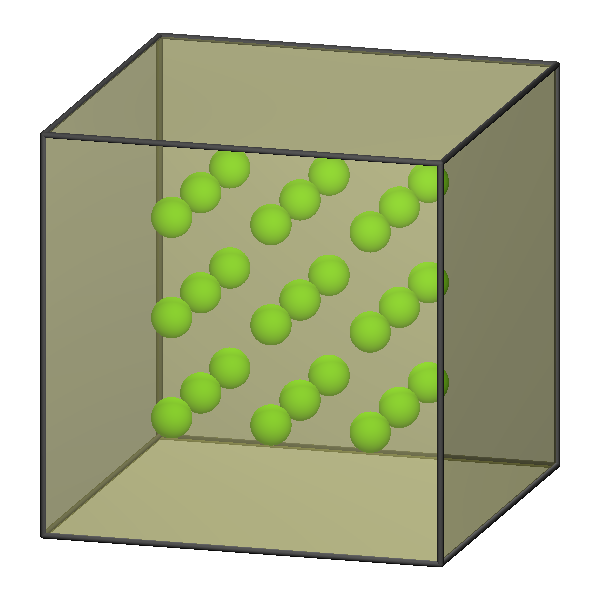}
 \end{tabular}
\end{center}
\caption{Degree of freedom diagrams for $\Q_r^-\Lambda^k$ spaces in two and three dimensions.}\label{fg:q}
\end{figure}
Now suppose that a mesh $\T$ of curvilinear cubes $K=F_K(\hat K)$ is given.  We assume that whenever two elements
$K_1$ and $K_2$ meet in a common face, say $f=F_{K_1}(\hat f_1)=F_{K_2}(\hat f_2)$, then the map
$F_{K_2}^{-1}\circ F_{K_1}|_{\hat f_1}$ mapping $\hat f_1$ onto $\hat f_2$ is linear.  The assembled
finite element space $\Q_r^-\Lambda^k(\T)$ consists of functions $u$ whose restriction $u_K$ to $K$ belongs to
$(F_K^{-1})^*\Q_r^-\Lambda^k(I^n)$ and for which the corresponding degrees of freedom are
single-valued on faces.  The choice of degrees of freedom implies that if $K_1$ and $K_2$ share a face $f$
of dimension $\ge k$, then the traces $\tr_fu_{K_1}$ and $\tr_f u_{K_2}$ coincide.  This is exactly the
condition needed for $u$ to belong to the space $H\Lambda^k(\Omega)$, i.e., for the exterior derivative
of $u$ to belong to $L^2\Lambda^{k+1}(\Omega)$ (see \cite[Lemma~5.1]{DRR}).  From the commutativity
of the pullback with the exterior derivative, we see that $d\Q_r^-\Lambda^k(\T)\subset \Q_r^-\Lambda^{k+1}(\T)$,
and so we obtain a subcomplex of the $L^2$ de~Rham complex on $\Omega$:
$$
\Q_r^-\Lambda^0(\T) \xrightarrow{d} \Q_r^-\Lambda^1(\T) \xrightarrow{d} \cdots 
\xrightarrow{d} \Q_r^-\Lambda^n(\T).
$$

The assembled finite element spaces $\Q_r^-\Lambda^k(\T)$ are well-known, especially when the 
maps $F_K$ are composed of dilations and translations, so the mesh consists of cubes.  The space $\Q_r^-\Lambda^0(\T)$ is
the usual $\Q_r$ approximation of $H^1(\Omega)$ and the space $\Q_r^-\Lambda^n(\T)$ is
the discontinuous $\Q_{r-1}$ approximation of $L^2(\Omega)$.  In three dimensions,
if we identify $H\Lambda^1$ and $H\Lambda^2$ with $H(\curl)$ and $H(\div)$, respectively,
then $\Q_r^-\Lambda^1(\T)$ and $\Q_r^-\Lambda^2(\T)$ are the N\'ed\'elec edge element
and face element spaces of the first kind, respectively \cite{Nedelec1}.

\section{Approximation properties on curvilinear cubes}\label{sc:approx}
We now return to the situation of Section~\ref{sc:fe} and
consider the approximation properties afforded by finite element spaces of differential $k$-forms
on meshes of curvilinear cubes.  Thus we suppose given:
\begin{enumerate}
 \item A space of reference shape $k$-forms $V(\hat K)$ and
a set of reference degrees of freedom $\Xi(\hat K)$ on the unit cube $\hat K$,
with each degree of freedom associated to a particular face of $\hat K$.
 \item A mesh $\T$ of the domain $\Omega$;
 \item For each $K\in \T$ a $C^1$ diffeomorphism $F_K$ of $\hat K$ onto $K$.
\end{enumerate}
We require regularity of the diffeomorphisms in the following sense.  Let $\bar K=h_K^{-1}K$
be the dilation of $K$ to unit diameter.  Then the composition $\bar F_K=h_K^{-1}F_K$ is
a $C^1$ diffeomorphism of $\hat K$ onto $\bar K$, and we assume $|\bar F_K|_{W^1_\infty(\hat K)}$
and $|\bar F_K^{-1}|_{W^1_\infty(\bar K)}$ are bounded uniformly for all elements.  We may therefore
apply Theorems~\ref{th:cov} and \ref{th:dil} to get
\begin{multline*}
 C^{-1} h_K^{k-n/2} \|v\|_{L^2\Lambda^k(K)}
  \le \|F^*_Kv\|_{L^2\Lambda^k(\hat K)} \le C h_K^{k-n/2} \|v\|_{L^2\Lambda^k(K)},
  \\
  v\in L^2\Lambda^k(K).
\end{multline*}

As described in Section~\ref{sc:fe}, the ingredients just given determine a space of $k$-form shape functions
$V(K)$ and a set of degrees of freedom $\Xi(K)$ on each element $K$.  From these, we
obtain an assembled finite element space $V(\T)$ consisting
of $k$-forms on $\Omega$ which belong piecewise to the $V(K)$ and for which corresponding
degrees of freedom associated to a common face of two elements take a single value.
We may define as well the set $\Xi(\T)$ of global degrees of freedom, each of
which is associated to a particular face $f$ of the triangulation.  If $\xi\in\Xi(\T)$ is
a global degree of freedom associated to a face $f$, and $K$ is any of the elements
containing $f$, then $\xi(v) = \hat\xi (F_K^* v|_K)$ where $\hat\xi\in\Xi(\hat K)$.
Since there exists a constant such that $|\xi(\hat v)|\le C\|\hat v\|_{L^2\Lambda^k(\hat K)}$
for all $\hat\xi\in\Xi(\hat K)$ and all $\hat v\in V(\hat K)$, we have
\begin{equation}\label{xibd}
|\xi(v)| \le C\|F_K^* v|_K\|_{L^2\Lambda^k(\hat K)}\le C h_K^{k-n/2}\|v\|_{L^2\Lambda^k(K)}.
\end{equation}

If it happens
that $V(K)$ contains the full polynomial space $\P_r\Lambda^k(K)$, then
Theorem~\ref{th:jackson} gives us the approximation result
\begin{equation}\label{approx}
\inf_{v\in V(K)} \|u-v\|_{L^2\Lambda^k(K)} \le C h_K^{r+1}|u|_{H^{r+1}\Lambda^k(K)},
\end{equation}
with a constant $C$ depending only on $r$, $n$, and the shape constant $C(K,r)$.
Our first goal in this section is to extend the estimate~\eqref{approx} to the global space $V(\T)$.
More precisely, in the setting just described and
under the same condition that for all $K\in\T$ the space
$V(K)$ contains the full polynomial space $\P_r\Lambda^k(K)$, we wish to show
that
\begin{equation}\label{glapprox}
\inf_{v\in V(\T)} \|u-v\|_{L^2\Lambda^k} \le C h^{r+1}|u|_{H^{r+1}\Lambda^k},
\end{equation}
where, as usual, $h$ denotes the maximum of the $h_K$.

In order to prove \eqref{glapprox}, we shall adapt the classical construction of the
Cl\'ement interpolant \cite{clement} to define an operator $\pi_h:L^2\Lambda^k\to V(\T)$ satisfying
\begin{equation}\label{clapprox}
\|u-\pi_h u\|_{L^2\Lambda^k} \le C h^{r+1}|u|_{H^{r+1}\Lambda^k},
\end{equation}
which certainly implies \eqref{glapprox}.   For
an analogous extension in the case of simplicial meshes, see~\cite[Sec.~5.4]{DRR}.

For the definition of the Cl\'ement interpolant we
need to introduce some notation.
Each degree of freedom $\xi\in\Xi(\T)$ determines a corresponding basis function $\phi_\xi\in V(\T)$ by
$\xi(\phi_\xi)=1$, $\eta(\phi_\xi)=0$ if $\eta\in\Xi(\T)$, $\eta\ne\xi$, giving the
expansion
\begin{equation}\label{basisexp}
v = \sum_{\xi\in\Xi(\T)} \xi(v)\phi_\xi, \quad v\in V(\T).
\end{equation}
Moreover, $\operatorname{supp}\phi_\xi\subset S_\xi$ where $S_\xi$ denotes
the union of elements $K$ which contain the face $f$.  The basis functions
on each element pull back to the reference basis functions, and so we can
obtain the estimate
\begin{equation}\label{basisbd}
 \|\phi_\xi\|_{L^2\Lambda^k(K)} \le C h_K^{-k+n/2},
\end{equation}
for all the basis functions and $K\in\T$.

If $S$ is a union of some of the elements of $\T$, let $P_S:L^2\Lambda^k(S)\to \P_r\Lambda^k(S)$
denote the $L^2$ projection.  If $S$ consists of the union of some of the elements which meet
a fixed element $K\in \T$, then we have
\begin{equation}\label{l2proj}
\|u-P_S u\|_{L^2\Lambda^k(S)} \le C h_K^{r+1}|u|_{H^{r+1}\Lambda^k(S)}.
\end{equation}
Indeed, applying the Bramble--Hilbert lemma on the
domain $S$, the left-hand side of (20) is bounded by
$C_S |u|_{H^{r+1}\Lambda^k(S)}$, and we need only show that the constant
$C_S$ can be bounded by $Ch_K$ with $C$ independent of $S$.
By translating and dilating we may assume that the
barycenter of $K$ is at the origin and its diameter $h_K=1$.
In that case, the number of vertices of the polygon $S$ is
bounded above in terms of the shape regularity constant, and for
each fixed number the point set determined by the vertices varies
in a compact set, again depending on the shape regularity.  Since the best
constant $C_S$ depends continuously on the the vertex set of $S$,
it is bounded on this compact set.
Motivated by \eqref{basisexp}, the Cl\'ement interpolant is defined by
$$
\pi_h u = \sum_{\xi\in \Xi(\T)} \xi(P_{S_\xi} u) \phi_\xi, \quad u\in L^2\Lambda^k.
$$

We now verify \eqref{clapprox}.
First we note that, restricted to a single element $K\in\T$,
$$
\pi_h u = \sum_{\xi\in \Xi(K)} \xi(P_{S_\xi} u)\phi_\xi.
$$
In view of \eqref{xibd} and \eqref{basisbd}, it follows that
\begin{equation}\label{l2bd}
\begin{aligned}
\|\pi_h u\|_{L^2\Lambda^k(K)}
 &\le \sum_{\xi\in\Xi(K)}|\xi(P_{S_\xi}u)| \|\phi_\xi\|_{L^2\Lambda^k(K)}
\\
&\le C\sum_{\xi\in\Xi(K)} h_K^{k-n/2}\|P_{S_\xi}u\|_{L^2\Lambda^k(K)}h_K^{-k+n/2}
\\
&\le C\sum_{\xi\in\Xi(K)}\|P_{S_\xi}u\|_{L^2\Lambda^k(K)}\le C\|u\|_{L^2\Lambda^k(\tilde K)}.
\end{aligned}
\end{equation}

Next, let $p=P_{\tilde K}u$, where $\tilde K$ is the union of all the elements of $\T$ meeting $K$.
Then $p$ is a polynomial form of degree at most $r$ on $\tilde K$, so $P_{S_\xi} p=p$  for all
$\xi\in\Xi(K)$.  It follows that, on $K$,
$$
\pi_h p = \sum_{\xi\in \Xi(K)} \xi(P_{S_\xi} p)\phi_\xi p  = \sum_{\xi\in \Xi(K)} \xi(p)\phi_\xi =p,
$$
with the last equality coming from \eqref{basisexp}.
Therefore, on $K$ we have that 
$$
\|u-\pi_h u\|_{L^2\Lambda^k(K)}   = \|u - p + \pi_h (u-p)\|_{L^2\Lambda^k(K)}
\le C\|u-p\|_{L^2\Lambda^k(\tilde K)},
$$
where we used \eqref{l2bd} in the last step (with $u$ replaced by $u-p$).  Applying \eqref{l2proj} in the case $S=\tilde K$
gives
$$
\|u-\pi_h u\|_{L^2\Lambda^k(K)}  \le C h_K^{r+1}|u|_{H^{r+1}\Lambda^k(\tilde K)},
$$
from which the global estimate \eqref{clapprox} follows.

In short, if the shape function space
$V(K) = (F_K^{-1})^*V(\hat K)$
contains  the full polynomial space $\P_r\Lambda^k(K)$, then the approximation estimate
\eqref{glapprox} holds.
Thus we are led to ask what conditions on the reference shape functions $V(\hat K)$ and the
mappings $F_K$ ensure that this is so.  Necessary and sufficient conditions on the space of
reference shape functions were given for multilinearly
mapped cubical elements in the
case of $0$-forms in two and three dimensions in \cite{DBF} and \cite{GM},
respectively.  For the case of $(n-1)$-forms,
i.e., $H(\div)$ finite elements, such conditions were
given in $n=2$ dimensions in \cite{DBF2}, in the lowest
order case $r=1$ in three dimensions in \cite{FalkGattoMonk}, and for
general $r$ in three dimensions in \cite{BD2}.
Necessary and sufficient conditions for $1$-forms in three dimensions
($H(\curl)$ elements) were also given in
the lowest order case in \cite{FalkGattoMonk}, and a closely
related problem for $H(\curl)$ elements 
in 3D was studied in \cite{BD1}.
These necessary and sufficient conditions in three dimensions are quite complicated,
and would be more so in the general case.  Here we content ourselves with sufficient
conditions which are much simpler to state and to verify, and which are adequate for many
applications.  These conditions are given in the following
theorem.  On the basis of the results given previously, its
proof is straightforward.
\begin{teo}\label{th:poly}
Suppose that either
\begin{enumerate}
 \item $F_K$ is an affine diffeomorphism and that $\P_r\Lambda^k(\hat K)\subset V(\hat K)$; or
 \item $F_K$ is a multilinear diffeomorphism and $\Q_{r+k}^-\Lambda^k(\hat K)\subset V(\hat K)$.
\end{enumerate}
Then $V(K)$ contains $\P_r\Lambda^k(K)$ and so \eqref{approx} and \eqref{glapprox} hold.
\end{teo}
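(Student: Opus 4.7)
The plan is as follows. Since $V(K) = (F_K^{-1})^* V(\hat K)$, the claim $\P_r\Lambda^k(K) \subset V(K)$ is equivalent to $F_K^*\bigl[\P_r\Lambda^k(K)\bigr]\subset V(\hat K)$, and this is what I would verify in each of the two cases. Once this containment is in hand, the element-wise bound \eqref{approx} is immediate from Theorem~\ref{th:jackson}, and the global bound \eqref{glapprox} follows from the Cl\'ement-type construction already carried out in this section.

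The workhorse is the coordinate formula \eqref{pullback}, which expresses the coefficient of $d\hat x^{j_1}\wedge\cdots\wedge d\hat x^{j_k}$ in $F_K^* p$ as a sum of products $(p_\sigma\circ F_K)\,(\partial F_K^{\sigma_1}/\partial\hat x^{j_1})\cdots(\partial F_K^{\sigma_k}/\partial\hat x^{j_k})$. Case~(1) is then routine: when $F_K$ is affine, its Jacobian entries are constants and $p_\sigma\circ F_K$ retains total polynomial degree at most $r$, so $F_K^* p\in\P_r\Lambda^k(\hat K)\subset V(\hat K)$.

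The heart of the theorem is case~(2), which I expect to be the main obstacle and which reduces to a careful per-variable degree count. Each component $F_K^i$ has degree at most $1$ in each variable separately, so $p_\sigma\circ F_K$ has degree at most $r$ in each $\hat x^s$. The factor $\partial F_K^{\sigma_l}/\partial\hat x^{j_l}$ is constant in $\hat x^{j_l}$ and of degree at most $1$ in each of the other variables. Since $j_1,\ldots,j_k$ are distinct, the product of the $k$ Jacobian factors has degree at most $k-1$ in each $\hat x^{j_l}$ (only the $k-1$ factors with a different differentiation index contribute there) and degree at most $k$ in each $\hat x^s$ for $s\notin\{j_1,\ldots,j_k\}$. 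Multiplying by $p_\sigma\circ F_K$, the coefficient of $d\hat x^{j_1}\wedge\cdots\wedge d\hat x^{j_k}$ has degree at most $r+k-1$ in the variables $\hat x^{j_l}$ and at most $r+k$ in the remaining variables, which is precisely the defining property of $\Q_{r+k}^-\Lambda^k(\hat K)$.

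The subtlety to watch is that the asymmetric tensor-product degree pattern built into $\Q_{r+k}^-\Lambda^k$ arises exactly because one of the $k$ multilinear Jacobian factors contributes zero degree in each index $j_l$ while the other $k-1$ contribute one; it is this interplay that determines the containment $F_K^*\P_r\Lambda^k(K)\subset\Q_{r+k}^-\Lambda^k(\hat K)$, rather than some strictly larger space. Once that is checked, the conclusions \eqref{approx} and \eqref{glapprox} are immediate from the material already in this section.
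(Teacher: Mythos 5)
Your proposal is correct and follows essentially the same route as the paper: reduce the containment $\P_r\Lambda^k(K)\subset V(K)$ to $F_K^*\bigl(\P_r\Lambda^k(K)\bigr)\subset V(\hat K)$, handle the affine case directly from \eqref{pullback}, and in the multilinear case perform the per-variable degree count showing each coefficient of $F_K^*(p\,dx^\sigma)$ has degree at most $r+k$ in every variable and at most $r+k-1$ in the variables $\hat x^{j_l}$, which places it in $\Q_{r+k}^-\Lambda^k(\hat K)$. Your count is simply a more explicit version of the paper's observation that $\partial F^{i_l}/\partial\hat x^{j_l}$ is multilinear but independent of $\hat x^{j_l}$.
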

\begin{proof}
 Let us write $F$ for $F_K$.
The requirement that $V(K)=(F^{-1})^*V(\hat K)$ contains $\P_r\Lambda^k(K)$ is
equivalent to requiring that $F^*\bigl(\P_r\Lambda^k(K)\bigr)\subset V(\hat K)$.
If $F$ is affine, then $F^*\bigl(\P_r\Lambda^k(K)\bigr)\subset \P_r\Lambda^k(\hat K)$,
as is clear from \eqref{pullback}.  The sufficiency of the first condition follows.

For $F$ multilinear, we wish to show that $F^*\bigl(\P_r\Lambda^k(K)\bigr)\subset\Q_{r+k}^-\Lambda^k(\hat K)$,
for which it suffices to show that $F^*(p\,dx^\sigma)\in \Q_{r+k}^-\Lambda^k(\hat K)$ if $p\in \P_r(K)$ and
$\sigma\in\Sigma(k,n)$. From \eqref{pullback} it suffices to show that
\begin{equation}\label{ts}
  (p\circ F) \pd{F^{i_1}}{\hat x^{j_1}}\cdots\pd{F^{i_k}}{\hat x^{j_k}}\,
  d\hat x^{j_1}\wedge\cdots\wedge d\hat x^{j_k} \in\Q_{r+k}^-\Lambda^k(\hat K).
\end{equation}
Now $p\in\P_r$ and $F$ multilinear imply that $p\circ F\in\Q_r(K)$.
Moreover, $\partial F^{i_l}/\partial \hat x^{j_l}$ is multilinear, but also independent of $\hat x^{j_l}$.
Therefore the product,\break
$(p\circ F) \partial{F^{i_1}}/{\partial\hat x^{j_1}}\cdots\partial{F^{i_k}}/{\partial\hat x^{j_k}}$, is a polynomial
of degree at most $r+k$ in all variables, but of degree at most $r+k-1$ in the variables $\hat x^{j_l}$.
Referring to the description of the spaces $\Q_r^-$ spaces derived in Section~\ref{sc:q}, we
verify \eqref{ts}.
\end{proof}

Note that the requirement on the reference space $V(\hat K)$ to obtain $O(h^{r+1})$ convergence
is much more stringent when the
maps $F_K$ are only assumed to be multilinear, than in the case when they are restricted
to be affine.  Instead of just requiring that $V(\hat K)$ contain the polynomial space $\P_r\Lambda^k(\hat K)$,
it must contain the much larger space $\Q_{r+k}^-\Lambda^k(\hat K)$.  For $0$-forms,
the requirement reduces to inclusion of the space $\Q_r(\hat K)$.  This result was obtained
previously in \cite{DBF} in two dimensions, and in \cite{GM}, in three dimensions.  The requirement
becomes even more stringent as the form degree, $k$, is increased.

\section{Application to specific finite element spaces}
Theorem~\ref{th:poly} gives conditions on the space $V(\hat K)$ of reference shape
functions which ensure a desired rate of $L^2$ approximation accuracy for the
assembled finite element space $V(\T)$.  In this section we consider several choices
for $V(\hat K)$ and determine the resulting implied rates of approximation.
According to the theorem, the result will be different for parallelotope meshes, in which
each element is an affine image of the cube $\hat K$, and for the more general situation of
curvilinear cubic meshes in which element is a multilinear image of $\hat K$ (which we
shall simply refer to as curvilinear meshes in the remainder the section).  Specifically,
if $s$ is the largest integer such that
\begin{equation}\label{p}
 \P_{s-1}\Lambda^k(\hat K)\subset V(\hat K),
\end{equation}
then the theorem implies a rate of $s$ (that is, $L^2$ error bounded by $O(h^{s})$) on parallelotope meshes, while if
$s$ is the largest integer for which the more stringent condition
\begin{equation}\label{c}
 \Q_{s+k-1}^-\Lambda^k(\hat K)\subset V(\hat K)
\end{equation}
holds, then the theorem implies a rate of $s$ more generally on curvilinear meshes.

\subsection{The space $\Q_r$}
First we consider the case of $0$-forms ($H^1$ finite elements) with
$V(\hat K)=\Q_r^-\Lambda^0(\hat K)$, which is the usual $\Q_r(\hat K)$ space, consisting of polynomials
of degree at most $r$ in each variable.  Then \eqref{p} holds for $s=r+1$, but not larger.  Therefore we find the $L^2$ approximation
rate to be $r+1$ on parallelotope meshes, as is well-known.  Since $k=0$, \eqref{c} also holds
for $s=r+1$, and so on curvilinear meshes we obtain
the same rate $r+1$ of approximation.  Thus, in this case, the generalization from parallelotope
to curvilinear meshes entails no loss of accuracy.

\subsection{The space $\Q_r^-\Lambda^n$}
The case of $n$-forms with $V(\hat K)=\Q_r^-\Lambda^n(\hat K)$ is quite different.
If we identify $n$-forms with scalar-valued functions, this is the space $\Q_{r-1}(\hat K)$.
Hence \eqref{p} holds with $s=r$, and so we achieve approximation
order $O(h^r)$ on parallelotope meshes.  However, for $k=n$, \eqref{c} holds
only if $s\le r-n+1$, and so Theorem~\ref{th:poly} only gives a rate of $r-n+1$ on curvilinear meshes in this
case, and \emph{no convergence at all} if $r\le n-1$.  Thus curvilinear meshes
entail a loss of one order of accuracy compared to parallelotope meshes in two dimensions,
a loss of two orders in three dimensions, etc.  This may seem surprising, since
if we identify an $n$-form $v\,dx^1\wedge\cdots\wedge dx^n$ with the function $v$, which
is a $0$-form, the space $\Q_r^-\Lambda^n(\hat K)$ corresponds to the space $Q_{r-1}(\hat K)=\Q_{r-1}^-\Lambda^0(\hat K)$,
which achieves the same rate $r$ on both classes of meshes.  The reason is that the
transformation of an $n$-form on $\hat K$ to an $n$-form on $K$ is very different
than the transform of a $0$-form.  The latter is simply $\hat v = v\circ F$ while the former
is
$$
\hat v \, d\hat x^1\wedge\cdots\wedge\hat x^n = (v\circ F) (\det F)\, dx^1\wedge \cdots \wedge dx^n.
$$

\subsection{The spaces $\Q_r^-\Lambda^k$}
Next we consider the case
$V(\hat K)=\Q_r^-\Lambda^k(\hat K)$ for $k$ strictly between $0$ and $n$.  Then
\eqref{p} holds with $s=r$ and \eqref{c} holds with $s=r-k+1$ (but no larger).
Thus, 
the rates of approximation achieved are $r$ on parallelotope meshes and
$r-k+1$ for curvilinear meshes.
In particular, if $r<k$, then the space provides no approximation in the curvilinear case.
For example, in three dimensions the space $\Q_1^-\Lambda^2(K)$, which, in conventional finite element
language is the trilinearly mapped Raviart-Thomas-N\'ed\'elec space, affords no approximation in $L^2$.
This fact was noted and discussed in \cite{nrw}.

\subsection{The space $\P_r\Lambda^n$}\label{ex-pr}
Since the space $H\Lambda^n(\Omega)$ does not require inter-element continuity,
the shape functions $V(\hat K)=\P_r\Lambda^n(\hat K)$ may be chosen
as reference shape functions (with all degrees of freedom in the interior
of the element).  In this case, \eqref{p} clearly holds for $s=r+1$,
giving the expected $O(h^{r+1})$ convergence on parallelotope meshes.
But \eqref{c} holds if and only if $\Q_{s+n-2}(\hat K)\subset\P_r(\hat K)$,
which holds if and only if $n(s+n-2)\le r$.  In two dimensions this condition
becomes $s\le r/2$, and so we only obtain the rate of $\lfloor r/2\rfloor$,
and no approximation at all for $r=0$ or $1$.  In three dimensions,
the corresponding rate is $\lfloor r/3\rfloor-1$, requiring $r\ge 6$ for first
order convergence on curvilinear meshes.  For general $n$, the rate is $\lfloor r/n\rfloor -n+2$.

\subsection{The serendipity space $\S_r$}\label{ex-serendipity}
The serendipity space $\S_r$, $r\ge 1$, is a finite element subspace of $H^1$ (i.e., a space of $0$-forms).
In two dimensions and, for small $r$, in three dimensions, the space has been used for many
decades.  In 2011, a uniform definition was given for all dimensions $n$ and all degrees $r\ge 1$ \cite{serendipity}.
According to this definition, the shape function space $\S_r(\hat K)$ consists of all polynomials of \emph{superlinear
degree} less than or equal to $r$, i.e., for which each monomial has degree at most $r$ ignoring those
variables which enter the monomial linearly (example: the monomial $x^2 y z^3$ has superlinear degree $5$).
From this definition, it is easy to see that $\P_r(\hat K)\subset \S_r(\hat K)$ but $\P_{r+1}(\hat K)\not\subset\S_r(\hat K)$.
Thus, from \eqref{p}, the rate of $L^2$ approximation of $\S_r$  on parallelotope meshes
is $r+1$.  Now $\Q_1(\hat K)\subset\S_r(\hat K)$
for all $r\ge 1$, but for $s\ge 2$, $\Q_s(\hat K)$ contains the element $(x_1\cdots x_n)^s$ of superlinear
degree $n s$.  It follows that \eqref{c} holds if and only if $s\le \max(2,\lfloor r/n\rfloor +1)$, which gives the
$L^2$ rate of convergence of the serendipity elements on curvilinear meshes.
This was shown in two dimensions in \cite{DBF}.

\subsection{The space $\S_r\Lambda^k$}
In \cite{cubicderham}, Arnold and Awanou defined shape functions and degrees of freedom for
a finite element space $\S_r\Lambda^k$ on cubic meshes in $n$-dimensions
for all form degrees $k$ between $0$ and $n$,
and polynomial degrees $r\ge 1$.  The shape function space $\S_r\Lambda^k(\hat K)$
they defined contains $\P_r\Lambda^k(\hat K)$ but not $\P_{r+1}\Lambda^k(\hat K)$, so the assembled finite element space
affords a rate of approximation $r+1$ on parallelotope meshes.
In the case $k=n$, $\S_r\Lambda^n(\hat K)$ in fact coincides with $\P_r\Lambda^n(\hat K)$,
so, as discussed above, the rate is reduced to $\lfloor r/n\rfloor -n+2$ on curvilinear
meshes.   In the case $k=0$, $\S_r\Lambda^0$ coincides with the serendipity
space $\S_r$, and so the rate is reduced to $\max(2,\lfloor r/n\rfloor +1)$ on curvilinear
meshes in that case.
In $n=2$ dimensions, this leaves the space $\S_r\Lambda^1$,
which is the BDM space on squares, for which the reference shape functions
are $\P_r\Lambda^1(\hat K)$ together with the span of the two forms $d(x^{r+1}y)$ and $d(xy^{r+1})$.
To check condition \eqref{c} we note that if $2s-1\le r$, then
$\Q_s^-\Lambda^1(\hat K)\subset\P_r\Lambda^1(\hat K)\subset \S_r\Lambda^1(\hat K)$.  However, if $2s-1 > r$,
then the $1$-form $x^{s-1}y^s\,dx$ belongs to $\Q_s^-\Lambda^1(\hat K)$ but not to
$\S_r\Lambda^1(\hat K)$.  We conclude that the rate of approximation
of $\S_r\Lambda^1$
on quadrilateral meshes in two dimension is $\lfloor (r+1)/2\rfloor$.

\bibliographystyle{amsplain}
\bibliography{quadfeec}

\end{document}